\documentclass[11pt]{article}

\usepackage{amsfonts}
\usepackage{amssymb}
\usepackage{amsmath}
\usepackage{amsthm}
\usepackage{graphicx}
\usepackage{color}

\addtolength{\textwidth}{2.2cm} 
\addtolength{\oddsidemargin}{-1.1cm}
\addtolength{\evensidemargin}{-1.1cm}
\addtolength{\topmargin}{-.75cm}

\newtheorem{notation}{Notations}[section]
\newtheorem{remarque}[notation]{Remark}
\newtheorem{thm}[notation]{Theorem}

\newtheorem{prop}[notation]{Proposition}

\newcommand{\gga}{\gamma}            % \gg already exists...

\newcommand{\gphi}{\varphi}

\newcommand{\gP}{\Phi}
\newcommand{\gD}{\Delta}

\newcommand{\gl}{\lambda}

\newcommand{\eps}{\varepsilon}

\newcommand{\R}{\mathbb R}

\newcommand{\N}{\mathbb{N}}

\newcommand{\cC}{\mathcal C}
\newcommand{\cH}{\mathcal{H}}
\newcommand{\bbar}{\big{|}}

\newcommand{\hausmoins}{\mathcal{H}^{m-1}}

\newcommand{\dive}{\operatorname{div}}

\newcommand{\sign}{\operatorname{sign}}

\newcommand{\totvarn}[1]{\int_{\R^m} |D_\gga #1| }

\newcommand{\totepsn}[2]{\int_{#2} \sqrt{\eps^2+ |D_\gga #1|^2} d \gga}
\newcommand{\Ldeufin}{L^2_\gga (\R^m)}

\newcommand{\Pn}{\tilde{P}}
\newcommand{\tE}{\tilde{E}}

\bibliographystyle{plain}

\begin{document}
\title{\Large A geometric approach for convexity in some variational problem in the Gauss space }

\author{
        M. Goldman
        \footnote{CMAP, CNRS UMR 7641, Ecole Polytechnique,
        91128 Palaiseau, France, email: goldman@cmap.polytechnique.fr}
       }
\date{}
\maketitle

\begin{abstract}
\noindent In this short note we prove the convexity of minimizers of some variational problem in the Gauss space. This proof is based on a geometric version of an older argument due to Korevaar.
\end{abstract}

\section{Introduction}
In the paper \cite{CGN} we prove together with A. Chambolle and M. Novaga, the convexity of the minimizers of the following variational problem in the Wiener space $X$:
\begin{equation}\label{probgenF}
 \min \int_X F(\nabla u)  +\frac{(u-g)^2}{2} d\gga
\end{equation}
under the general hypothesis that $F$ and $g$ are convex functions. The idea is to approximate the infinite dimensional problem by a finite dimensional one, to show convexity of the minimizers of the finite dimensional problems and prove convergence of these minimizers towards the minimum of \eqref{probgenF}.  In \cite{CGN}, we followed the approach of Alvarez Lasry and Lions \cite{ALL} to prove convexity in finite dimension. The aim of this note is to show an alternative proof based on ideas of Korevaar \cite{kore} when $F$ is the total variation (which was our main motivation in \cite{CGN}). More precisely, we will show that for $g \in \Ldeufin$ a convex function then the solution of
 
 \begin{equation}\label{princprobu}
 \min_{ BV_\gga(\R^m) \cap \Ldeufin} \totvarn{u} +\frac{1}{2} \int_{\R^m} |u-g|^2 d\gga
 \end{equation}
 is convex. As a by-product of our analysis we will also 
get that the minimizers of the Ornstein-Uhhlenbeck functional 
\[\min_{ H^1_\gga(\R^m) \cap \Ldeufin} \int_{\R^m} \frac{|\nabla u|^2}{2} +\frac{1}{2} \int_{\R^m} |u-g|^2 d\gga\]
are convex if $g$ is convex.

\noindent The plan of the note is the following. 
In Section 2 we recall some notation about functions of bounded variation and in Section 3 we show the convexity of the minima of \eqref{princprobu}.

\medskip 
\noindent{\bf Acknowledgements.} 
I warmly thank Matteo Novaga for the numerous discussions we had on this problem and for his constant support. I would also like to acknowledge the hospitality of the Scuola Normale Superiore di Pisa where this work has been done.
%%%%%%%%%%%%%%%%%%%%%%%%%%%%%%%%%%%%%%%%%%%%%%%%%%%%%%
\section{Notation and preliminary results}
%\subsection{ Wiener space and functions of bounded variation}

\noindent Let $m \in \N$ be fixed and let $\gga$ be the standard Gaussian measure on $\R^m$. Let us denote by $\Ldeufin:=L^2(\R^m,\gga)$.

\noindent We now give the definitions of  Sobolev spaces and functions of bounded variation in the Gauss space. 
For a smooth function $\Phi: \R^m \to \R^m$,   we define 
\[\dive_\gga \Phi (x):= \dive \Phi -\Phi\cdot x. \]
%whenever it makes sense. 

\noindent The operator $\dive_\gga$ is the adjoint of the gradient so that for every $u \in \cC^1_c(\R^m)$ and every $\Phi \in \cC^1_c(\R^m,\R^m)$, the following integration by parts holds:
\begin{equation}\label{integpart}
\int_{\R^m} u \dive_\gga \Phi \,d\gga =-\int_{\R^m} \nabla u\cdot \Phi d\gga.
\end{equation}
\noindent We will denote by $H^1_\gga(\R^m)$ the closure of the gradient in $\Ldeufin$. From this, formula \eqref{integpart} still holds for $u \in H^1_\gga(\R^m)$ and $\Phi \in \cC^1_c(\R^m,\R^m)$.

\noindent Given $u\in L^1_\gga(\R^m)$ we say that $u\in BV_\gga(\R^m)$ if
\[\totvarn{u}=\sup \left\{ \int_{\R^m} u \dive_\gga \Phi \, d\gga; \; \Phi \in \cC_b^1(\R^m,\R^m), \; |\Phi(x)|\le 1 \; \forall x\in \R^m\right\}<+\infty.\]  \\

\noindent We see that functions in $BV_\gga(\R^m)$ are in $BV_{\textrm{loc}}(\R^m)$ and that $D_\gga u= \gga Du$ so that most of the properties of classical $BV$ functions extends to the function in $BV_\gga(\R^m)$ (see \cite{AFP}). In particular for every set $E$ of finite Gaussian perimeter, the reduced boundary $\partial^* E$ of $E$ is rectifiable and every point of this reduced boundary has an outward normal $\nu^E$. Defining the sets $E^s$ by
\[E^{(s)}= \left\{ x\in \R^m \; :\; \lim_{r\to 0} \frac{ |E\cap B_r(x)|}{|B_R|}=s\right\}\]
then we have $\cH^{m-1}((E^{(0)}\cup E^{(1)}\cup \partial^*E)^c)=0$ for every set of finite Gaussian perimeter. Here we denoted by $\cH^{m-1}$ the $m-1$ dimensional Hausdorff measure.

\noindent We finally recall some facts about pairings between measures and bounded functions (see \cite{anzel} for more details). \\
\noindent We define the space $X_2$ to be the space of bounded functions $z$ with $\dive_\gga z \in \Ldeufin$.  For every smooth open set $\Omega$, the trace $[z\cdot\nu]$ can be defined in such a way that the integration by part formula
\[\int_{\Omega} (z,D_\gga u) d\gga +\int_{\Omega} u \dive_\gga z d\gga=\int_{\partial \Omega} [z\cdot\nu] u \gga(x) \cH^{m-1}\]
holds for $z\in X_2$ and $u\in BV_\gga \cap \Ldeufin$ where as usual $(z, D_\gga u)$ is the measure defined by
\[\int_{\R^m} (z\cdot D_\gga u) d\gga= -\int_{\R^m} u \varphi \dive_\gga z d\gga -\int_{\R^m} u z\cdot \nabla \varphi d\gga\]
for every $\varphi \in \cC^1_c(\R^m,\R^m)$.

\section{Convexity of the minimizer}

In this section we are going to prove the following result:

\noindent Let $g \in \Ldeufin$ be a convex function then the minimizer of
\begin{equation}\label{probun}
\min_{ BV_\gga \cap \Ldeufin} \totvarn{u} +\frac{1}{2} \int_{\R^n} |u-g|^2 d\gga
\end{equation}
is a convex function.

\noindent As in many other papers involving the total variation, we are going to study first the regularized problem:
\begin{equation}\label{probepsn}
\min_{ BV_\gga \cap \Ldeufin} J_\eps(u)=\totepsn{u}{\R^m} +\frac{1}{2} \int_{\R^m} |u-g|^2 d\gga
 \end{equation}
where as usual, if the Radon-Nikodym decomposition of $D_\gga u$ is given by $D_\gga u = \nabla u d\gga + D^s_\gga u$  we let
\[\totepsn{u}{\R^m}=\int_{\R^m} \sqrt{\eps^2 +|\nabla u|^2}d\gga + |D^s_\gga u|(\R^m).\]
As a simple consequence of the Reshetnyak's continuity Theorem we have that $J_\eps$ is lower semicontinuous for the $L^2_\gga(\R^m)$ convergence (see  \cite{AFP}).

\noindent We start by studying the Dirichlet problem  on  balls, namely

\begin{equation}\label{probepsnball}
\min_{ BV_\gga(B_R)} \totepsn{u}{B_R} +\frac{1}{2} \int_{B_R} |u-g|^2 d\gga + \int_{\partial B_R} |u-M| \gga(x) d\hausmoins(x).
 \end{equation}

\noindent Here $B_R$ is the ball of radius $R$ centered in the origin and $M$ is a constant to be chosen later. The term $\int_{\partial B_R} |u-M| \gga(x) d\hausmoins(x)$ can be seen as a Dirichlet term (see \cite{giaquinta} and \cite{andreu}). In the following we will note by $F(p)=\sqrt{\eps^2 +|p|^2}$.

\noindent On bounded domains, by Theorem 6.7 in \cite{andreu} we can give a characterization of the minimizers of \eqref{probepsnball}
\begin{thm}[Characterization of the minima]\label{subdif}
 A function $u\in BV_\gga (B_R)$ minimizes \eqref{probepsnball} if and only if $\frac{\nabla u}{\sqrt{\eps^2+|\nabla u|^2}} \in X_2$ and
 \begin{align*}
 &-\dive_\gga \left(\frac{\nabla u}{\sqrt{\eps^2+|\nabla u|^2}}\right) +u=g, \qquad  \qquad \frac{\nabla u}{\sqrt{\eps^2+|\nabla u|^2}}\cdot D^s_\gga u=|D^s_\gga u| \quad |D^s_\gga u|-a.e. \\[10pt]
  &\textrm{ and } \qquad  [\frac{\nabla u}{\sqrt{\eps^2+|\nabla u|^2}}\cdot \nu] \in \sign (M-u) \quad  \cH^{m-1}-a.e. \textrm{ in } \partial B_R.\end{align*}
where $\nu$ is the outward normal to $B_R$.
\end{thm}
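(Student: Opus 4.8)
\noindent The plan is to read \eqref{probepsnball} as the minimisation over $L^2_\gga(B_R)$ of a convex energy and to write its Euler--Lagrange relation as a subdifferential inclusion; this is exactly the framework of Theorem~6.7 in \cite{andreu}. Concretely, I would introduce
\[
\mathcal{G}(u):=\int_{B_R}\sqrt{\eps^2+|\nabla u|^2}\,d\gga+|D^s_\gga u|(B_R)+\int_{\partial B_R}|u-M|\,\gga(x)\,d\hausmoins(x),
\]
so that \eqref{probepsnball} reads $\min_{L^2_\gga(B_R)}\,\mathcal{G}(u)+\tfrac12\int_{B_R}|u-g|^2\,d\gga$. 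The functional $\mathcal{G}$ is convex, proper and lower semicontinuous for the $L^2_\gga(B_R)$ convergence: convexity and properness are immediate, and lower semicontinuity follows from Reshetnyak's theorem for the bulk part together with the relaxation of the Dirichlet trace term, which is handled by the usual trick of extending $u$ by the constant $M$ across $\partial B_R$ inside a slightly larger ball, where the boundary term becomes a jump contribution to the total variation (see \cite{giaquinta,andreu}); meanwhile $u\mapsto\tfrac12\int_{B_R}|u-g|^2d\gga$ is of class $C^1$ with gradient $u-g$. Hence $u$ minimises \eqref{probepsnball} if and only if $0\in\partial\mathcal{G}(u)+(u-g)$, i.e.\ $g-u\in\partial\mathcal{G}(u)$, and everything reduces to computing $\partial\mathcal{G}$.

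\noindent The one point that genuinely requires checking — and, I expect, the only real obstacle — is that the Gaussian weight does not disturb the subdifferential computation of \cite{andreu}. Since $\gga(x)=(2\pi)^{-m/2}e^{-|x|^2/2}$ satisfies $\dive_\gga\Phi=\gga^{-1}\dive(\gga\,\Phi)$ and, on the fixed bounded ball $\overline{B_R}$, is smooth and bounded between two positive constants, the weighted space $X_2$, the normal trace $[z\cdot\nu]$, the pairing $(z,D_\gga u)$ and the integration by parts formula recalled in Section~2 are precisely the Anzellotti objects of \cite{anzel} relative to a nondegenerate weight; consequently the lower semicontinuity and subdifferential results of \cite{giaquinta,andreu} carry over after simply replacing $dx$ by $d\gga$ and $\dive$ by $\dive_\gga$ throughout their proofs. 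I would carry out this (routine but necessary) bookkeeping, after which \cite[Thm.~6.7]{andreu} applies directly.

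\noindent It then remains to unravel the inclusion $g-u\in\partial\mathcal{G}(u)$. Because $F(p)=\sqrt{\eps^2+|p|^2}$ is smooth and strictly convex with linear growth and recession function $F^\infty(p)=|p|$, the characterisation furnishes a field $z\in X_2$ with $|z|\le1$ such that $-\dive_\gga z=g-u$, i.e.\ the equation $-\dive_\gga z+u=g$; on the absolutely continuous part $z$ is forced to equal $\nabla F(\nabla u)=\dfrac{\nabla u}{\sqrt{\eps^2+|\nabla u|^2}}$ $\gga$-a.e.\ (so in particular this field belongs to $X_2$); the polar identity on the singular part reads $z\cdot\tfrac{D^s_\gga u}{|D^s_\gga u|}=F^\infty\big(\tfrac{D^s_\gga u}{|D^s_\gga u|}\big)=1$, i.e.\ $\dfrac{\nabla u}{\sqrt{\eps^2+|\nabla u|^2}}\cdot D^s_\gga u=|D^s_\gga u|$ $\,|D^s_\gga u|$-a.e.; and finally, pairing the Dirichlet term $\int_{\partial B_R}|u-M|\gga\,d\hausmoins$ through the boundary integration by parts formula of Section~2 and using $\partial(|\cdot-M|)(t)=\sign(t-M)$ gives $[z\cdot\nu]\in\sign(M-u)$ $\cH^{m-1}$-a.e.\ on $\partial B_R$. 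These are exactly the three stated conditions.

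\noindent For the converse implication there is nothing more to do: the energy in \eqref{probepsnball} is convex and lower semicontinuous on $L^2_\gga(B_R)$, and the three conditions are, by the above, the inclusion $0\in\partial\big(\mathcal{G}+\tfrac12\int_{B_R}|\cdot-g|^2d\gga\big)(u)$, which for a convex functional is equivalent to $u$ being a global minimiser. Thus, modulo the verification that the weighted Anzellotti calculus of Section~2 slots into the proof of \cite[Thm.~6.7]{andreu}, the statement follows.
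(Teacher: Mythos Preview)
Your proposal is correct and follows precisely the approach the paper indicates: the paper does not give its own proof but simply invokes \cite[Thm.~6.7]{andreu}, and you have spelled out exactly how that theorem applies here, including the (correct) observation that on $\overline{B_R}$ the Gaussian density is smooth and bounded away from $0$ and $\infty$, so the Anzellotti pairing and the subdifferential computation of \cite{andreu} transfer verbatim with $dx$ replaced by $d\gga$ and $\dive$ by $\dive_\gga$. There is nothing to add.
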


 \noindent Adapting  very slightly the proof of \cite[Th.~5.16]{andreu}, we get the following comparison principle:
\begin{prop}[Comparison]\label{comparison}
Let $g_1 \ge g_2$ and $\gphi_1 \ge \gphi_2$ then the minimizers $u_i$ with $i=1,2$ of

\[
\min_{ BV_\gga(B_R)} \int_{B_R} F(D_\gga u) d \gga +\frac{1}{2} \int_{B_R} |u-g_i|^2 d\gga + \int_{\partial B_R} |u-\gphi_i| \gga(x) d\hausmoins(x)
 \]

\noindent verify $u_1 \ge u_2$.
\end{prop}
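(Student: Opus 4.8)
The plan is to follow the standard comparison argument for total-variation-type functionals, as in \cite[Th.~5.16]{andreu}, adapted to the Gaussian setting. The key idea is to test the minimality of $u_1$ against the competitor $u_1 \wedge u_2 = \min(u_1,u_2)$ and the minimality of $u_2$ against $u_1 \vee u_2 = \max(u_1,u_2)$, then add the two resulting inequalities. Since $F(p) = \sqrt{\eps^2 + |p|^2}$ is convex and the Gaussian weight $\gga(x)$ is smooth and positive, the submodularity of the (weighted, relaxed) total variation gives
\[
\int_{B_R} F(D_\gga (u_1\wedge u_2)) d\gga + \int_{B_R} F(D_\gga (u_1\vee u_2)) d\gga \le \int_{B_R} F(D_\gga u_1) d\gga + \int_{B_R} F(D_\gga u_2) d\gga,
\]
and similarly the boundary Dirichlet terms $\int_{\partial B_R}|u-\gphi_i|\gga\, d\hausmoins$ satisfy the analogous inequality once we use $\gphi_1 \ge \gphi_2$ (so that on $\partial B_R$ the function $u_1\wedge u_2$ is paired with $\gphi_2$ and $u_1\vee u_2$ with $\gphi_1$ in the favorable way, using the elementary pointwise inequality $|a\wedge b - c| + |a\vee b - d| \le |a-d| + |b-c|$ valid whenever $c\le d$).

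Next I would handle the fidelity terms. Writing $A = \{u_2 > u_1\}$, one computes
\[
\int_{B_R}|u_1\wedge u_2 - g_1|^2 + |u_1\vee u_2 - g_1|^2 \,d\gga \quad\text{is not quite the right combination,}
\]
so instead one pairs $u_1\wedge u_2$ with $g_1$? No — the correct bookkeeping is: test $u_1$'s minimality with $u_1\wedge u_2$ and $u_2$'s minimality with $u_1\vee u_2$, so the fidelity comparison reduces on $A$ to $|u_2 - g_1|^2 + |u_1 - g_2|^2 \le |u_1-g_1|^2 + |u_2-g_2|^2$, i.e. $2(u_1-u_2)(g_1-g_2)\le 0$ on $A$, which holds because $u_1 - u_2 < 0$ on $A$ and $g_1 \ge g_2$. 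Adding all three inequalities and using that $u_1, u_2$ are each minimizers forces each of these competitor comparisons to be an equality; in particular $\gga(A)$-a.e. on $A$ one has $2(u_1-u_2)(g_1-g_2) = 0$. To upgrade this to $|A| = 0$ one invokes strict convexity: the energy is strictly convex in $u$ (the fidelity term is), so the minimizer of each problem is unique, and if $A$ had positive measure then $u_1\wedge u_2$ and $u_1\vee u_2$ would be distinct minimizers of the two problems (a standard contradiction in these $L^2$-fidelity problems), forcing $|A|=0$, i.e. $u_1 \ge u_2$.

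The main obstacle is making the submodularity inequality for $\int F(D_\gga u)\,d\gga$ rigorous at the level of $BV_\gga$ functions, including the singular part $|D^s_\gga u|$ and the behavior on the jump set where $\{u_1 > u_2\}$ and $\{u_1 \le u_2\}$ meet; this is exactly the content carried over from \cite[Th.~5.16]{andreu}, where one splits $B_R$ into $\{u_1 > u_2\}$ and its complement and checks that no extra perimeter is created along the interface (the Gaussian weight $\gga(x)$, being continuous and bounded away from $0$ on $B_R$, causes no difficulty here). A secondary technical point is the treatment of the boundary trace term under truncation, which again follows the cited reference since traces of $BV_\gga$ functions on $\partial B_R$ behave as in the classical case up to the smooth positive factor $\gga(x)$. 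Given these, the rest is the elementary pointwise algebra indicated above, and I would present it exactly as the ``add the two inequalities, conclude each is an equality, invoke strict convexity'' scheme.
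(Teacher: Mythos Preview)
Your approach is exactly the one the paper has in mind: the paper gives no proof but simply refers to \cite[Th.~5.16]{andreu}, which is precisely the truncation/submodularity argument you outline, and the Gaussian weight causes no new difficulty since it is smooth and bounded away from zero on $B_R$.

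One bookkeeping slip worth fixing: the competitors should be swapped. You should test $u_1$ against $u_1\vee u_2$ and $u_2$ against $u_1\wedge u_2$, not the other way around. With the pairing you wrote, the competitors differ from the originals only on $A^c=\{u_1\ge u_2\}$, where the fidelity difference is $2(u_1-u_2)(g_1-g_2)\ge 0$ and the boundary inequality likewise goes the wrong way, so nothing closes up. With the swapped pairing everything localizes on $A=\{u_2>u_1\}$, and then the signs are exactly the ones you computed (indeed, the pointwise inequalities you wrote down are the correct ones for the swapped pairing, so your algebra is right even though your verbal description of the pairing is reversed). After that correction the strict-convexity/uniqueness endgame is as you describe: equality is forced in both minimality inequalities, so $u_1\vee u_2$ and $u_1\wedge u_2$ are minimizers of problems $1$ and $2$ respectively, and uniqueness gives $u_1\vee u_2=u_1$, i.e.\ $u_1\ge u_2$.
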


With this comparison property in hands, we can prove that for $M$ large enough, the minimizer of \eqref{probepsnball} makes vertical contact angle with the boundary of $B_R$. In the following, we will say that a function $v$ is a supersolution of \eqref{probepsnball} if it minimizes the functional with $\tilde{g}\ge g$ and $\varphi \ge M$.

\begin{prop}[vertical contact angle]\label{contact}
If  $C \ge \frac{m}{\eps r}+ \frac{R}{\eps}+r+|g|_{L^\infty(B_R)}$, then 
\[v(x)=\begin{cases} C-\sqrt{r^2-(x-x_0)^2} \quad \textrm{if } x \in B_r(x_0) \\[8pt]
M \qquad \textrm{otherwise}
\end{cases}\]
is a supersolution of \eqref{probepsnball}  if $B_r(x_0) \subset B_R$. Then for $M>C$, the minimizer of \eqref{probepsnball} has vertical contact angle with $\partial B_R$.
\end{prop}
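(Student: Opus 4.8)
The plan is to use the comparison principle of Proposition~\ref{comparison} together with an explicit barrier. The first step is to verify that the function $v$ defined by the lower hemisphere $C-\sqrt{r^2-|x-x_0|^2}$ inside $B_r(x_0)$ and the constant $M$ outside is an admissible competitor, i.e. that $v \in BV_\gga(B_R)$; this is clear since $v$ is Lipschitz across $\partial B_r(x_0)$ as long as we check that the two pieces agree there (both equal $C$ on $\partial B_r(x_0)$) and that $M \geq C$ so that there is no interior jump. The main computation is to show that $v$ is a supersolution: I would plug $v$ into the Euler--Lagrange characterization from Theorem~\ref{subdif}, but with $\tilde g \geq g$ and $\varphi \geq M$ in place of $g$ and $M$. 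Inside $B_r(x_0)$, $v$ is smooth and $\nabla v = \frac{x-x_0}{\sqrt{r^2-|x-x_0|^2}}$, so $\frac{\nabla v}{\sqrt{\eps^2+|\nabla v|^2}}$ is a smooth vector field there, and one computes $-\dive_\gga\!\left(\frac{\nabla v}{\sqrt{\eps^2+|\nabla v|^2}}\right) + v =: \tilde g$; the point is to check that this quantity is $\geq g$, which is where the lower bound $C \geq \frac{m}{\eps r} + \frac{R}{\eps} + r + |g|_{L^\infty(B_R)}$ enters. Concretely, $\dive_\gga \Phi = \dive\Phi - \Phi\cdot x$, and on $B_r(x_0)\subset B_R$ one bounds $|\dive\Phi| \leq \frac{|\nabla v|}{\sqrt{\eps^2+|\nabla v|^2}}\cdot(\text{mean curvature terms}) \leq \frac{m}{\eps r}$ (using that the vector field has modulus $\leq 1$ and the hemisphere has principal curvatures controlled by $1/r$, divided by the regularizing $\eps$), while $|\Phi\cdot x| \leq |x| \leq R$ on $B_R$, so $|\dive_\gga\Phi| \leq \frac{m}{\eps r} + \frac{R}{\eps}$; combined with $v \geq C - r$ inside $B_r(x_0)$ this gives $\tilde g = -\dive_\gga\Phi + v \geq C - r - \frac{m}{\eps r} - \frac{R}{\eps} \geq |g|_{L^\infty(B_R)} \geq g$. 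Outside $B_r(x_0)$, $v \equiv M$ is constant, so the PDE reads $M = \tilde g$ there with $\tilde g = M \geq g$ (for $M$ large), and the singular-measure and sign conditions are vacuous or trivially satisfied.

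The second step is to handle the contact set and the boundary term. On the interface $\partial B_r(x_0)$ the gradient $\nabla v$ blows up, so the normal trace of $\frac{\nabla v}{\sqrt{\eps^2+|\nabla v|^2}}$ from inside equals the full outward normal to $B_r(x_0)$; since $v$ jumps nowhere (it is continuous, $=C$, across this interface as $M\geq C$), there is no singular part of $D_\gga v$ supported there once we take $M \geq C$, and if $M > C$ then in fact $v$ takes values $\leq C < M$ everywhere, so that on $\partial B_R$ we have $v < M \leq \varphi$ hence $[\Phi\cdot\nu] \in \sign(\varphi - v)$ is automatically satisfied by choosing the trace to be $+1$; moreover $v = M > C > |g|_{L^\infty}$ on a neighbourhood of $\partial B_R$ (since $B_r(x_0)$ is compactly contained), so near the boundary $v$ is literally constant and the EL system holds there trivially. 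Thus $v$ satisfies the full characterization of a minimizer for some $(\tilde g, \varphi)$ with $\tilde g \geq g$ and $\varphi = M$, i.e. $v$ is a supersolution in the stated sense.

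The final step is to deduce the vertical contact angle. Let $u$ be the minimizer of \eqref{probepsnball}. For any point $x_0 \in B_R$ at distance less than $r$ from $\partial B_R$, we can slide a ball $B_r(y)$ with $B_r(y)\subset B_R$ and use Proposition~\ref{comparison} (with $g_1 = \tilde g \geq g = g_2$ and $\varphi_1 = M = \varphi_2$) to get $u \leq v$ wherever $v$ is defined as the barrier; pushing the barrier ball toward $\partial B_R$ forces the trace of $u$ on $\partial B_R$ to be bounded above in such a way that $u$ cannot satisfy $[\Phi\cdot\nu] = \pm 1$ with a non-vertical graph — more precisely, since $v$ has unbounded gradient approaching the portion of $\partial B_r(y)$ near $\partial B_R$, the inequality $u \leq v$ combined with the sign condition $[\Phi\cdot\nu]\in\sign(M-u)$ from Theorem~\ref{subdif} forces $u < M$ on $\partial B_R$, which by the same theorem means $[\frac{\nabla u}{\sqrt{\eps^2+|\nabla u|^2}}\cdot\nu] = 1$, i.e. vertical contact angle.

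I expect the main obstacle to be the curvature/divergence estimate inside $B_r(x_0)$: one must carefully compute $\dive_\gga$ of the normalized gradient of a spherical cap and show the constant $\frac{m}{\eps r} + \frac{R}{\eps}$ genuinely dominates it uniformly on $B_r(x_0)$, keeping track of the Gaussian term $-\Phi\cdot x$ which is what produces the $\frac{R}{\eps}$ contribution and is absent in the Euclidean version of Korevaar's barrier; the rest is a fairly standard application of comparison.
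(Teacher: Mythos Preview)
Your approach is exactly the paper's: compute the differential inequality $-\dive_\gga\bigl(\nabla F(\nabla v)\bigr)+v\ge g$ for the spherical-cap barrier inside $B_r(x_0)$, then invoke comparison with internally tangent balls to force $u<M$ on $\partial B_R$ and hence $[\frac{\nabla u}{\sqrt{\eps^2+|\nabla u|^2}}\cdot\nu]=1$ via Theorem~\ref{subdif}. The computation and the final contact-angle argument match the paper's proof.

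There is, however, one genuine slip. You claim that for $M\ge C$ ``the two pieces agree'' on $\partial B_r(x_0)$ and that ``there is no singular part of $D_\gga v$ supported there''. This is false when $M>C$: the inner trace of $v$ on $\partial B_r(x_0)$ equals $C$, while the outer value is $M$, so $v$ has a jump of size $M-C>0$ and $D^s_\gga v=(M-C)\,\nu^{B_r(x_0)}\gga\,\cH^{m-1}\!\!\restriction_{\partial B_r(x_0)}$. The barrier is still a supersolution, but for a different reason: the calibrating field $z=\nabla F(\nabla v)$ has trace $\nu^{B_r(x_0)}$ from inside, so after extending $z$ by $\tfrac{x-x_0}{|x-x_0|}$ outside $B_r(x_0)$ one gets $z\in X_2$, the singular condition $z\cdot D^s_\gga v=|D^s_\gga v|$ holds (the jump points outward, matching $z$), and one checks directly that $-\dive_\gga z+M\ge g$ on $B_R\setminus B_r(x_0)$ using $M>C$. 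The paper glosses over this point as well, but your explicit assertion that $v$ is continuous is incorrect and should be replaced by this argument. A minor additional inconsistency: you bound $|\Phi\cdot x|\le R$ and then write $\tfrac{R}{\eps}$ in the final estimate; the latter is what the statement requires (and is weaker for $\eps\le1$, hence still correct), but the write-up should be made consistent.
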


\begin{proof}
We must show that for $C$ large enough,
\[-\dive_\gga (\nabla F (\nabla v))+v-g \ge 0.\]

\noindent A direct computation shows that in $B_r(x_0)$ we have $\displaystyle \nabla v= \frac{x-x_0}{\sqrt{r^2-(x-x_0)^2}}$ thus
\[\nabla F(\nabla v)= \frac{x-x_0}{\sqrt{\eps^2 r^2 +(1-\eps^2) |x-x_0|^2}}.\]
From this we get that
\begin{align*}
-\dive_\gga (\nabla F (\nabla v))+v-g \ge & -\frac{m}{\eps r} +\frac{x-x_0}{\sqrt{\eps^2 r^2 +(1-\eps^2) |x-x_0|^2}}\cdot x +C
\\
& -\sqrt{r^2-(x-x_0)^2} -|g|_{L^\infty(B_R)}
\\
\ge & -\frac{m}{\eps r}-\frac{|x-x_0|}{\sqrt{\eps^2 r^2 +(1-\eps^2) |x-x_0|^2}} |x| +C\\
&-r-|g|_{L^\infty(B_R)}\\
\ge &  -\frac{m}{\eps r}- \frac{R}{\eps} +C-r-|g|_{L^\infty(B_R)}
\end{align*}
Thus if $C\ge \frac{m}{\eps r}+ \frac{R}{\eps}+r+|g|_{L^\infty(B_R)}$ then $v$ is a super-solution.

\noindent If $M>C$, then considering balls of radius $r$ such that $\partial B_r \cap \partial B_R$ is reduced to a point, by the comparison Theorem \ref{comparison}, if $u$ minimizes \eqref{probepsnball} then $M>C\ge v\ge u$ and thus by Theorem \ref{subdif} we have 
\[[\frac{\nabla u}{\sqrt{\eps^2+|\nabla u|^2}} \cdot \nu] =1 \qquad \hausmoins- a.e. \textrm{ on } \partial B_R\]
which is the vertical contact angle condition.
\end{proof}

\noindent The interior regularity of minimizers of \eqref{probepsnball} easily follows by a result of Giaquinta, Modica and Soucek \cite{giaquinta}.

\begin{prop}\label{regularity}
Let $g$ be a $\cC^\alpha$ function then the minimizer of \eqref{probepsnball} is $\cC^{2,\alpha}(B_R)$.
\end{prop}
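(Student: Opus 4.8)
\noindent The plan is to read \eqref{probepsnball} as a classical non-parametric variational problem with a smooth integrand of linear growth and $\cC^\alpha$ dependence on the space variable, and then to invoke the interior regularity theory for such functionals due to Giaquinta, Modica and Soucek \cite{giaquinta}. Writing $\gga(x)=(2\pi)^{-m/2}e^{-|x|^2/2}$ for the Gaussian density, the interior part of the functional is
\[\int_{B_R} f(x,u,\nabla u)\,dx,\qquad f(x,s,p)=\gga(x)\,\sqrt{\eps^2+|p|^2}+\tfrac12\,\gga(x)\,(s-g(x))^2,\]
while the boundary term $\int_{\partial B_R}|u-M|\,\gga(x)\,d\hausmoins$ is irrelevant for \emph{interior} regularity. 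On the compact set $\overline{B_R}$ the weight $\gga$ is smooth with $0<c_R\le\gga\le C_R$, so $f$ is smooth in $(s,p)$, strictly convex and of linear growth in $p$, and $\cC^\alpha$ in $x$ (because $g\in\cC^\alpha$ and $\gga$ is smooth). Since $D^2_{pp}F(p)=(\eps^2+|p|^2)^{-3/2}\big(\eps^2\,\mathrm{Id}+|p|^2\mathrm{Id}-p\otimes p\big)$, on each region $\{|p|\le L\}$ one has $c(\eps,L,R)\,\mathrm{Id}\le D^2_{pp}f\le C(\eps,R)\,\mathrm{Id}$: the problem is uniformly elliptic \emph{once the gradient is under control}, but its ellipticity degenerates as $|p|\to\infty$.

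\noindent The argument then has four steps. \emph{Step 1 ($L^\infty$ bound).} Comparing $u$ with the constants $\pm K$ for $K\ge\max(|M|,|g|_{L^\infty(B_R)})$ via Proposition \ref{comparison} (these constants being the minimizers associated with the data $\pm K$), one gets $\|u\|_{L^\infty(B_R)}\le K$; equivalently, truncating $u$ at heights $\pm K$ does not increase the functional. \emph{Step 2 (interior Lipschitz bound).} This is the crux: for minimal-surface-type functionals with lower-order terms and $x$-dependence, \cite{giaquinta} provides an interior gradient estimate, yielding $u\in\mathrm{Lip}_{loc}(B_R)$ with $|\nabla u|\le L(R')$ on each $B_{R'}\Subset B_R$. \emph{Step 3 ($\cC^{1,\beta}$).} On $B_{R'}$ the Euler--Lagrange equation of Theorem \ref{subdif} (with $D^s_\gga u=0$ since $u$ is now Lipschitz) can be written $\dive\big(\gga\,\nabla F(\nabla u)\big)=\gga\,(u-g)$; on the region $\{|\nabla u|\le L\}$ this is a uniformly elliptic equation in divergence form with bounded measurable coefficients and bounded right-hand side, so De Giorgi--Nash--Moser (applied, via difference quotients, to the equation satisfied by $\partial_k u$) gives $u\in\cC^{1,\beta}_{loc}(B_R)$ for some $\beta\in(0,1)$. \emph{Step 4 (Schauder bootstrap).} Once $u\in\cC^{1,\beta}_{loc}$, the coefficients $\gga(x)\,D^2_{pp}F(\nabla u(x))$ of the linearized equation are $\cC^{\min(\beta,\alpha)}_{loc}$ and the data are $\cC^\alpha$; Schauder estimates then give $u\in\cC^{2,\min(\beta,\alpha)}_{loc}$, after which those coefficients are in fact $\cC^{1,\beta}_{loc}$, and a second application of Schauder (now using only $g\in\cC^\alpha$) yields $u\in\cC^{2,\alpha}(B_R)$.

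\noindent The only genuinely delicate point is Step 2. Because $F$ has linear growth its ellipticity degenerates at infinity, so a Lipschitz a priori estimate is not automatic; it is obtained in \cite{giaquinta} by a maximum-principle / Bernstein-type argument tailored to integrands of this form, which is precisely why we quote that reference rather than reproduce the estimate. Everything downstream of Step 2 is the routine quasilinear-elliptic bootstrap, and the Gaussian weight enters only through smooth, uniformly positive and bounded factors, affecting none of the estimates.
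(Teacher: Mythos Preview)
Your approach is essentially the paper's: comparison gives an $L^\infty$ bound, the interior Lipschitz estimate of Giaquinta--Modica--Sou\v{c}ek \cite{giaquinta} handles the degenerate-ellipticity step, and then the standard De Giorgi/Schauder bootstrap (which the paper compresses into ``classical regularity theory \cite{GT}'') yields $\cC^{2,\alpha}$. The one point the paper makes explicit and you gloss over is that the lower-order term $G(x,u)=\tfrac12(u-g(x))^2$ does \emph{not} meet the structural hypothesis $|\partial_u G|\le C$ required in \cite{giaquinta}; the paper therefore first replaces $G$ by a truncated $\tilde G$ with linear growth in $u$, applies \cite{giaquinta} to that modified functional, and only then uses the comparison with $\pm M$ to identify the two minimizers --- your Step~1 morally does the same job, but strictly speaking the truncation of $G$ should come \emph{before} you invoke \cite{giaquinta}, not after.
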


\begin{proof}
By Theorem 3.3 of \cite{giaquinta} we have that minimizers of
\[\min_{ BV_\gga(B_R)} \int_{B_R} F(Du) d\gga + \int_{B_R} G(x,u) d\gga + \int_{\partial B_R} |u-M| \gga(x) d\hausmoins(x)\]
are locally Lipschitz if $G(x,u)$ verifies the following hypothesis:
\begin{itemize}
\item $\bbar\displaystyle \frac{\partial G}{\partial u}\bbar+\bbar\frac{\partial^2 G}{\partial u \partial x}\bbar\le C$.
\item $\displaystyle \frac{\partial^2G}{\partial u^2} \ge 0$.
\end{itemize}
 Originally we have $G(x,u)=\frac{1}{2} |u-g(x)|^2$ which does not verifies exactly the hypothesis. However if we set $\tilde{G}(x,u)=\Psi(u)-g(x)u+\frac{1}{2} g(x)^2$ where $\Psi(u)=\frac{1}{2}u^2$ if $u\le C$ and $\Psi$ convex, $\cC^2$ with linear growth at infinity then $\tilde{G}$ verifies the condition mentioned above. The Euler-Lagrange equation verified by the minimizers with $\tilde{G}$ instead of $G$ is 
 \begin{equation}\label{tilde}\frac{\partial \Psi}{\partial u} +\partial \gP_\gphi(u)=g(x).\end{equation}
 Now we can apply Theorem 3.3 of \cite{giaquinta} to find that solutions of \eqref{tilde} are locally Lipschitz. Exactly as in Proposition \ref{comparison} the comparison principle holds for this equation and thus $M$ (respectively $-M$) is a supersolution (respectively a subsolution). This implies that if $C\ge M$ solutions of \eqref{tilde} are also solutions of \eqref{probepsnball} which are thus locally Lipschitz. By classical regularity theory for elliptic equations (see \cite{GT}) this implies that the solutions are indeed $\cC^{2,\alpha}(B_R)$.
\end{proof}

\begin{remarque}\rm
This proposition in particular applies for $g$ convex since convex functions are locally Lipschitz.

\end{remarque}
\noindent Having only interior regularity it is not possible to directly apply the results of Korevaar \cite{kore} which need continuity up to the boundary. The idea will be to use a geometric version of Korevaar's argument to get the convexity of the minimizers. 

\noindent For simplicity, in this part of the proof we focus on the case $\eps=1$.  By rescaling, the general case of $\eps \neq 1$ can be easily recovered (the Gaussian measure $\gga$ is not invariant by this scaling but it does not matter). Consider now the set (see Figure \ref{setE})
\begin{equation}\label{E}
E=\left\{(x,t)\in B_R \times [-M;M] \, / \, t<u(x)   \right\}. 
\end{equation}

\begin{figure}[ht]
\centering{\input{tildeE.pstex_t}}

% \centering{
     
   %      \includegraphics[scale=0.7]{illustration_isoper.pdf}}%		
\caption{}
\label{setE}
\end{figure}

\noindent The aim is to show that $E$ is a concave set. First we need to show that $E$ is regular. For this we follow an idea of Giusti (see \cite{giusti} and \cite{giusti2}) showing that $E$ is a solution of a certain obstacle problem.

\noindent For $F$ a set of finite perimeter in $\R^{m+1}$ let $\Pn(F)$ be defined by
\[\Pn(F)=\int_{\partial^* F} \gga(x) d\cH^m(x,t). \]

\noindent $\Pn$ is thus the perimeter associated to the measure $\mu(x,t)=\gga(x) dx dt$. Let now $H(x,t)=(t-g(x))\gga(x)$ then we have the following:

\begin{prop}\label{regE}
The set $\tE=E\cup (B_R^c\times [-M;M])$, where $E$ is defined in \eqref{E}, is a minimizer of
\begin{equation}\label{obstacle}
\Pn(F)+\int_F H(x,t) \;  dx\, dt
\end{equation}
among all sets containing $B_R^c \times [-M;M]$. As a consequence $\partial \tE$ is $\cC^1$.
\end{prop}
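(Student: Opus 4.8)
The strategy is Giusti's: one identifies the functional in \eqref{probepsnball} (with $\eps=1$) with a weighted perimeter functional acting on vertical subgraphs, so that the minimality of $u$ is turned into the minimality of $\tE$ among an obstacle class, and then one applies the regularity theory for almost minimal sets. Throughout, the decisive structural feature is that the weight $\gga(x)$ and the monotone dependence $t\mapsto H(x,t)$ are invariant under vertical translations, which is what allows both the reduction to subgraphs and the matching of boundary terms.

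\emph{Step 1 (the functional on subgraphs).} For $w\in BV_\gga(B_R)$ with $|w|\le M$, write $S_w=\{(x,t)\in B_R\times\R:\ -M<t<w(x)\}$ and $S_w^{\#}=S_w\cup(B_R^c\times[-M;M])$. Since $\gga$ does not depend on $t$, the relaxed area identity for subgraphs (Miranda; see \cite{giusti}) together with the computation of the lateral and horizontal contributions gives
\[\Pn(S_w^{\#})=\int_{B_R}\sqrt{1+|\nabla w|^2}\,d\gga+|D^s_\gga w|(B_R)+\int_{\partial B_R}|w-M|\,\gga(x)\,d\hausmoins+c_0,\]
where $c_0$ (coming from the faces $\{t=\pm M\}$ and the part over $B_R^c$) is independent of $w$. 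Slicing in $t$ and using $|w|\le M$,
\[\int_{S_w^{\#}}H(x,t)\,dx\,dt=\int_{B_R}\Big(\int_{-M}^{w(x)}(t-g(x))\,dt\Big)\gga(x)\,dx+c_1=\frac12\int_{B_R}|w-g|^2\,d\gga+c_2,\]
with $c_1,c_2$ again independent of $w$. Adding these, the functional in \eqref{obstacle} evaluated at $S_w^{\#}$ equals, up to an additive constant, the functional $J_1(w)$ of \eqref{probepsnball} with $\eps=1$.

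\emph{Step 2 (reduction to subgraphs and minimality).} Given any admissible competitor $F$ for \eqref{obstacle} (taken within the slab $\R^m\times[-M;M]$, so that the functional is bounded below), set $w_F(x):=-M+\mathcal L^1\big(F\cap(\{x\}\times[-M;M])\big)$ and $F^*:=S_{w_F}^{\#}$. By the vertical rearrangement inequality for the weighted perimeter (valid because $\mu$ is invariant under vertical translations, \cite{giusti}), $\Pn(F^*)\le\Pn(F)$; and since $t\mapsto H(x,t)$ is nondecreasing and each slice has the same measure, $\int_{F^*}H\le\int_F H$; moreover $F^*\supseteq B_R^c\times[-M;M]$. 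Hence it suffices to test minimality against sets of the form $S_w^{\#}$. Now $\tE=S_u^{\#}$ (here $-M\le u\le M$ for $M$ large, by Propositions \ref{comparison} and \ref{contact}), so Step 1 and the minimality of $u$ for \eqref{probepsnball} give that $\tE$ minimizes \eqref{obstacle}.

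\emph{Step 3 (regularity).} Thus $\tE$ is a minimizer of a weighted perimeter with bounded bulk term $H$ subject to the obstacle $B_R^c\times[-M;M]$; since $\gga$ is smooth and positive on $\bar B_R\times[-M;M]$ this is locally an almost minimizer of the Euclidean perimeter with an obstacle, so by the standard regularity theory (De Giorgi's theorem and the regularity at the obstacle, \cite{giusti,giusti2}) $\partial^*\tE$ is $\cC^{1,\alpha}$ off a closed singular set of codimension at least $8$. In the cylinder $B_R\times\R$, however, $\tE$ is the subgraph of $u$, which is $\cC^{2,\alpha}(B_R)$ by Proposition \ref{regularity}, so there is no singular part and no vertical portion of $\partial\tE$ over $B_R$; and the vertical contact angle of Proposition \ref{contact} ensures that the graph of $u$ joins the obstacle in a $\cC^1$ fashion along $\partial B_R$. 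Therefore $\partial\tE$ is a $\cC^1$ hypersurface. The main obstacle is the bookkeeping in Steps 1--2: one must track the lateral and horizontal contributions to $\Pn$ so that they reproduce \emph{exactly} the Dirichlet term $\int_{\partial B_R}|u-M|\gga\,d\hausmoins$ and the fidelity term, and verify that the vertical rearrangement is admissible and lowers both pieces of the functional — which, as noted, hinges on the $t$-translation invariance of $\mu$ and of the monotonicity of $H$.
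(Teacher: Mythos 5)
Your route is genuinely different from the paper's. The paper never passes through the subgraph identification: it takes the Euler--Lagrange system of Theorem \ref{subdif} and builds from it the vector field $z=\bigl(-\nabla u/\sqrt{1+|\nabla u|^2},\,1/\sqrt{1+|\nabla u|^2}\bigr)$ in $B_R\times\,]-M;M[$, extended by $-\nu^{B_R}$ on the lateral boundary; this field calibrates $\partial\tE$ ($|z|\le 1$, $[z\cdot\nu^{\tE}]=1$, $\dive_\gga z=g-u$), and the minimality inequality for an arbitrary competitor $F$ is then obtained directly from the Anzellotti--Gauss--Green formula applied on $\tE\setminus F$ and $F\setminus\tE$, together with the decomposition of $\partial^*(\tE\cap F^c)$ from \cite{FMP}. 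Your argument instead reduces competitors to subgraphs by vertical rearrangement and transfers the minimality of $u$; this is closer to the original Miranda--Giusti scheme and avoids the reduced-boundary bookkeeping of the calibration, but it does require the weighted ($t$-independent) version of the rearrangement lemma of \cite{giusti}, which you should at least indicate how to verify (it does go through, since $\gga(x)$ factors out of the vertical integrations).

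There is, however, a genuine gap in Steps 1--2. The identity $\Pn(S^{\#}_w)=\int_{B_R}\sqrt{1+|\nabla w|^2}\,d\gga+|D^s_\gga w|(B_R)+\int_{\partial B_R}|w-M|\gga\,d\hausmoins+c_0$ fails whenever $w$ attains the value $-M$ on a set of positive measure: over that set the reduced boundary of $S^{\#}_w$ contains neither the (flat) graph at height $-M$ nor the bottom face $\{t=-M\}$, while your right-hand side counts both, so the true perimeter is smaller than your formula by $2\gga(\{w=-M\})$ (no such problem occurs at $t=M$, where graph and top face coincide). This is not a negligible degenerate case: the rearranged function $w_F$ equals $-M$ exactly where the competitor's vertical slice is empty, e.g.\ for $F$ obtained from $\tE$ by deleting a small cylinder, and these are precisely the competitors you must handle. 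With the miscount the chain breaks in the harmful direction: from $J_1(w_F)\ge J_1(u)$ you only obtain $\Pn(F^*)+\int_{F^*}H\ge \Pn(\tE)+\int_{\tE}H-2\gga(\{w_F=-M\})$. The gap is repairable: for instance compare $w_F$ with $\max(w_F,-C)$, where $|u|\le C<M$; truncation does not increase the gradient and lateral terms, and the fidelity term gains at least $\tfrac12\bigl[(M-|g|_{L^\infty(B_R)})^2-(C+|g|_{L^\infty(B_R)})^2\bigr]$ per unit $\gga$-measure of $\{w_F=-M\}$, which dominates the miscounted $2$ once $M$ is chosen large. Alternatively, prove only local minimality of $\tE$ near $\partial\tE$, which suffices for the regularity conclusion since $\inf_{B_R}u>-M$ by comparison, so local competitors never reach the bottom face. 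Finally, in Step 3 the $\cC^1$ junction along the contact set does not follow from the contact-angle condition of Proposition \ref{contact}; it is exactly the content of the obstacle-regularity result of Miranda \cite{miranda} that the paper invokes.
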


%\tred{mettre un dessein?}

\begin{proof}
 Let us define the field

\[z(x,t)=\left\{\begin{array}{ll}\displaystyle \left(-\frac{\nabla u}{\sqrt{1+|\nabla u|^2}},\frac{1}{\sqrt{1+|\nabla u|^2}}\right)& \quad (x,t) \in B_R \times ]-M;M[\\[16pt]
 -\nu^{B_R}(x) & \quad (x,t) \in \partial B_R \times ]-M;M[\end{array}\right. .\]

\noindent Then $z$ is a $X_2$ vector field in $B_R\times ]-M;M[$ satisfying $|z|_{\R^{m+1}}=1$ and $[z\cdot \nu^{\tE}]=1$ where $\nu^{\tE}$ is the outward normal to $\tE$. Moreover if $z=(z',z_{m+1})$ with $z'\in \R^m$ and $z_{m+1}\in \R$ then setting by a slight abuse of notations
\[\dive_\gga z= \dive_\gga z' + \frac{\partial z_{m+1}}{\partial t}\]
 we have $\dive_\gga z= g-u$. Hence if $F\gD \tE\subset B_R \times ]-M;M[$, as $t<u(x)$ in $E$,

%\begin{align*}
%\int_{\tE\backslash F} (\dive_\gga z) \gga(x) dx dt=\int_{\tE\backslash F} (g-u)\gga(x) dx dt & \le \int_{\tE\backslash F} (g(x)-t) \gga(x) dx dt\\
%&=-\int_{\tE\backslash F}H(t,x) dx dt= \int_{\tE\cap F} H dx dt-\int_{\tE} H dx dt.
%\end{align*}

\begin{align*}
\int_{\tE\backslash F} (\dive_\gga z) d\mu=\int_{\tE\backslash F} (g-u)d\mu & \le \int_{\tE\backslash F} (g(x)-t) d\mu\\
&=-\int_{\tE\backslash F}H(t,x) dx dt= \int_{\tE\cap F} H dx dt-\int_{\tE} H dx dt.
\end{align*}
\noindent On the other hand we have:
%\[\int_{\tE\backslash F} (\dive_\gga z) \gga(x) dx dt=\int_{\partial^*(\tE\backslash F)} z\cdot \nu^{\tE\backslash F} \gga(x) d\cH^n(x,t)\]

\[\int_{\tE\backslash F} (\dive_\gga z) d\mu=\int_{\partial^*(\tE\backslash F)} [z\cdot \nu^{\tE\backslash F}] \gga(x) d\cH^m(x,t)\]
But $\tE\backslash F= \tE\cap F^c$ and as noticed by Figalli, Maggi and Pratelli in \cite{FMP},
\[\partial^*(\tE\cap F^c)=J_{\tE,F^c} \cup \left(\partial^* \tE\cap (F^c)^{(1)}\right) \cup \left( \partial^* F^c \cap {\tE}^{(1)} \right)\]
where $J_{\tE,F^c}=\left\{x\in \partial^*\tE\cap \partial^* F^c / \nu^{\tE}=\nu^{F^c}\right\}$. Moreover we have:
\[\nu^{\tE\backslash F}=\left\{\begin{array}{ll}
\nu^{\tE} & \textrm{in } \partial^* \tE\cap (F^c)^{(1)}\\
\nu^{F^c}=-\nu^F & \textrm{in } \partial^* F^c \cap {\tE}^{(1)}\\
\nu^{\tE}=-\nu^F & \textrm{in } J_{\tE,F^c}
\end{array}\right. .\]

\noindent From this we find
%\begin{align*}
%\int_{\tE\backslash F} (\dive_\gga z) \gga(x)\, dx dt&= \int_{\partial^* \tE \cap F^{(0)}} \gga d\cH^n -\int_{\partial^*F\cap {\tE}^{(1)}} \nu^F \cdot z \gga d\cH^n + \int_{J_{\tE,F^c}} z\cdot \nu^{\tE} \gga d\cH^n\\
%&\ge \int_{\partial^* \tE\cap F^{(0)}} \gga d\cH^n-\int_{\partial^*F\cap {\tE}^{(1)}} \gga d\cH^n + \int_{J_{\tE,F^c}} z\cdot \nu^{\tE} \gga d\cH^n.
%\end{align*}

\begin{align*}
\int_{\tE\backslash F} (\dive_\gga z) d\mu&= \int_{\partial^* \tE \cap F^{(0)}} \gga d\cH^m -\int_{\partial^*F\cap {\tE}^{(1)}} \nu^F \cdot z \gga d\cH^m + \int_{J_{\tE,F^c}} [z\cdot \nu^{\tE}] \gga d\cH^m\\
&\ge \int_{\partial^* \tE\cap F^{(0)}} \gga d\cH^m-\int_{\partial^*F\cap {\tE}^{(1)}} \gga d\cH^m + \int_{J_{\tE,F^c}} [z\cdot \nu^{\tE}] \gga d\cH^m.
\end{align*}
We thus find:
\[\int_{\tE\cap F} H dx dt-\int_{\tE} H dx dt\ge \int_{\partial^* \tE\cap F^{(0)}} \gga d\cH^m-\int_{\partial^*F\cap \tE^{(1)}} \gga d\cH^m + \int_{J_{\tE,F^c}} [z\cdot \nu^{\tE}] \gga d\cH^m.\]

\noindent Similarly, studying what happens on $F\backslash \tE$ we get:
\[\int_{\tE\cap F} H dx dt-\int_F H dx dt\le \int_{\partial^* F\cap {\tE}^{(0)}} \gga d\cH^m-\int_{\partial^*\tE\cap F^{(1)}} \gga d\cH^m + \int_{J_{F,{\tE}^c}} [z\cdot \nu^F] \gga d\cH^m.\]

\noindent Summing these two inequalities and using that $\int_{J_{\tE,F^c}} [z\cdot \nu^{\tE}] \gga d\cH^m=\int_{J_{F,{\tE}^c}} [z\cdot \nu^F] \gga d\cH^m$ we have:
\begin{eqnarray*}
&&\int_{\partial^*F\cap({\tE}^{(0)}\cup {\tE}^{(1)})} \gga(x) d\cH^m(x,t)+\int_F H(x,t) dx dt \ge
\\
&& \int_{\partial^*\tE\cap(F^{(0)}\cup F^{(1)})} \gga(x) d\cH^m(x,t)+\int_{\tE} H(x,t) dx dt. 
\end{eqnarray*}
Adding to this equality $\int_{\partial^*\tE \cap \partial^*F} \gga(x) d\cH^m(x,t)$ 
and using that $\cH^m((A^{(1)}\cup A^{(0)} \cup \partial^*A)^c)=0$ 
for every set  of finite perimeter $A\subset \R^{m+1}$, we find as desired that
\[
\int_{\partial^*F} \gga(x) d\cH^m(x,t)+\int_F H(x,t) dx dt \ge\int_{\partial^*\tE} \gga(x) d\cH^m(x,t)+\int_{\tE} H(x,t) dx dt. 
\]

\noindent The regularity of $\partial \tE$ follows from an old paper of Miranda \cite{miranda}. We point out that in the paper cited above, the results are written for the classical perimeter without curvature terms. However, the argument is based on a blow-up procedure under which our functional reduces to the classical perimeter.
\end{proof}

We can now prove the concavity of $\tE$.
\begin{prop}
The set $\tE$ is concave thus $u$ is convex. 
\end{prop}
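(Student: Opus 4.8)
The plan is to adapt Korevaar's concavity maximum principle to the geometric setting, working with the set $\tE$ rather than the function $u$ directly. The key idea is to introduce the ``vertical distance'' functional: for a point $z=(x,t)$ and the hypograph region, measure how far above the graph one must look, and then study the concavity deficit of $\partial\tE$ by a contradiction argument. Concretely, I would define for $z_0,z_1 \in \partial\tE$ and $\lambda\in[0,1]$ the midpoint $z_\lambda=\lambda z_1+(1-\lambda)z_0$ and consider the function measuring the vertical signed distance from $z_\lambda$ to $\partial\tE$, i.e. the quantity $d^v(z_\lambda,\partial\tE)$ appearing in Figure~\ref{setE}. Concavity of $\tE$ is equivalent to this deficit being $\le 0$ for all such choices; suppose for contradiction it attains a strictly positive maximum at some interior configuration $(z_0,z_1,\lambda)$.

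First I would record the preliminary reductions: by Proposition~\ref{regularity} the minimizer $u$ is $\cC^{2,\alpha}$ in the interior, by Proposition~\ref{contact} it makes vertical contact with $\partial B_R$ (so the ``bad'' part of $\partial\tE$ over $\partial B_R$ is a vertical cylinder, which is convex and causes no trouble), and by Proposition~\ref{regE} the boundary $\partial\tE$ is globally $\cC^1$ and solves the obstacle problem \eqref{obstacle}, hence away from the obstacle $\partial\tE$ has mean curvature (with respect to the weighted measure $\mu$) prescribed by $H$. The Euler--Lagrange relation from Theorem~\ref{subdif}, read geometrically, says precisely that the weighted mean curvature of $\partial E$ at a point $(x,t)$ with $t=u(x)$ equals $g(x)-t$ up to the normalization by $F$; the crucial structural fact is that this ``curvature $=$ affine-in-$t$ minus convex-in-$x$'' right-hand side is itself a concave function of $(x,t)$ (since $g$ is convex), which is exactly the hypothesis Korevaar's argument needs.

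Then comes the core maximum-principle step. At the contradictory maximizing configuration, the outward normals $\nu^{\tE}$ at $z_0$ and $z_1$ must be parallel (first-order optimality in the positions of $z_0,z_1$ along $\partial\tE$), and the tangent planes are therefore parallel; one slides the tangent plane at $z_0$ up by the deficit amount and compares. Writing $\partial\tE$ locally as graphs of functions $\psi_0,\psi_1$ over the common tangent directions, the function $\psi(y)=\lambda\psi_1(\cdot)+(1-\lambda)\psi_0(\cdot)-\psi_\lambda(\cdot)$ has an interior maximum, so its gradient vanishes and its Hessian is $\le 0$ there. Feeding this into the curvature equation and using that the prescribed curvature is concave in $(x,t)$ and monotone in the relevant sense, one derives that at the contact point the curvature inequality forces the Hessian term to be $\ge 0$ as well, hence $\equiv 0$, and a strong-maximum-principle / connectedness argument (or tracking the contact point to the boundary, where the vertical contact angle from Proposition~\ref{contact} gives the needed boundary control) upgrades this to $\psi\equiv 0$, contradicting positivity of the deficit. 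One must also dispatch the boundary cases where $z_0$ or $z_1$ lies on the horizontal caps $t=\pm M$ or on the vertical cylinder over $\partial B_R$: for $M$ large (as in Proposition~\ref{contact}) the graph of $u$ stays strictly between $\pm M$, so the cap case is vacuous, and the cylinder is convex so it only helps.

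The main obstacle, and the place where ``geometric Korevaar'' really differs from the classical PDE version, is handling the limited regularity: we only have $\cC^1$ regularity of $\partial\tE$ globally and $\cC^{2,\alpha}$ only in the interior, so the second-order comparison cannot be run naively up to $\partial B_R$. The fix is precisely the geometric reformulation: instead of differentiating the equation twice one exploits the obstacle-problem characterization and works with the perimeter-type functional, using that on the contact set the surface is smooth by elliptic regularity, while the vertical contact angle lets the contact point never escape to the lateral boundary in a harmful way. After obtaining concavity of $\tE$ one reads off convexity of $u$ immediately, since $u(x)=\sup\{t : (x,t)\in\tE\cap(B_R\times[-M,M])\}$ is then the hypograph description of a concave region, i.e. $u$ is convex; finally one removes the regularization by letting $\eps\to 0$ and the ball by letting $R\to\infty$, using the lower semicontinuity of $J_\eps$ and the comparison principle to pass the convexity to the minimizer of \eqref{probun}, convexity being stable under the $L^2_\gga$ convergence of minimizers.
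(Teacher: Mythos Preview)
Your overall architecture matches the paper's: define a vertical-distance concavity deficit, use compactness and the $\cC^1$ regularity of $\partial\tE$ (Proposition~\ref{regE}) to attain its maximum, rule out boundary maxima via the vertical contact angle (Proposition~\ref{contact}), and then run Korevaar at an interior maximizing configuration. Where you diverge from the paper is in the execution of the interior step and in your understanding of what the geometric reformulation is for.

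The paper does \emph{not} work with local graphs $\psi_0,\psi_1$ over tangent planes and the curvature equation of $\partial\tE$. Once the maximizing pair $z,z'$ is in the interior, one simply slides each down to the graph of $u$, so $z=(x,u(x))$, $z'=(x',u(x'))$, and the deficit becomes
\[
C(\lambda,z,z')=u(\lambda x+(1-\lambda)x')-\lambda u(x)-(1-\lambda)u(x').
\]
This is exactly the classical Korevaar quantity for the function $u$, which is $\cC^{2,\alpha}$ there by Proposition~\ref{regularity}. First-order optimality gives $\nabla u(x)=\nabla u(x')=\nabla u(x_\lambda)=:p$; second-order optimality along directions $\tau$ gives $D^2u(x_\lambda)\le\lambda D^2u(x)+(1-\lambda)D^2u(x')$. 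Writing the Euler--Lagrange equation at the three points (the principal part $a_{ij}(\nabla u)D_{ij}u$ and the drift term $\nabla F(\nabla u)\cdot x$ have the \emph{same} coefficients at all three since $\nabla u\equiv p$), subtracting the convex combination from the value at $x_\lambda$ yields
\[
\underbrace{-a_{ij}(p)\big[D_{ij}u(x_\lambda)-\lambda D_{ij}u(x)-(1-\lambda)D_{ij}u(x')\big]}_{\ge 0}
+\underbrace{C(\lambda,z,z')}_{>0}
=\underbrace{g(x_\lambda)-\lambda g(x)-(1-\lambda)g(x')}_{\le 0},
\]
an immediate contradiction. There is no strong-maximum-principle step, no ``upgrade to $\psi\equiv0$'', and no need to track the contact point to the boundary; your proposed endgame is both unnecessary and, as written, would not close the argument.

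Relatedly, you misread the role of the obstacle-problem reformulation. It is \emph{not} a device to avoid differentiating the equation; its sole purpose is to supply $\cC^1$ regularity of $\partial\tE$ all the way up to $\partial B_R$, so that $d^v$ is continuous and the maximum is attained, and so that the vertical contact angle forces this maximum to the interior. Once inside, one works with $u$ and the PDE directly. Finally, the passages $R\to\infty$ and $\eps\to0$ do not belong to this proposition; they are carried out afterwards in Theorem~\ref{convTVn}.
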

\begin{proof}
We will show that the set $U= \overline{E^c}$ is convex (see Figure \ref{setE}). Let us define for every $z=(x,t) \in \overline{B}_R\times[-M;M]$ the vertical distance of $z$ to $U$ by
\[d^v(z,U)=\inf\left(|t-t'| \, /\,  (x',t') \in U\right).\]

\noindent The function $d^v$ is continuous since $\partial U$ is a $\cC^1$ surface by Proposition \ref{regE}. $U$ is a compact set thus the function 
\[C(\gl, z,z')=d^v(\gl z +(1-\gl) z', U) \qquad \textrm{for }(\gl,z , z') \in [0;1]\times U\times U\]
attains its maximum. If this maximum is zero then $U$ is convex and we are done. Assume on the contrary that this maximum is positive. 

\noindent By the vertical contact angle condition we can assume that this maximum is attained at points $z$ and $z'$ in the interior of $\overline{B}_R\times[-M;M]$. Moreover, if $z=(x,t)\in U$, by decreasing $t$ (which increases $C$), we can assume that $t=u(x)$. Analogously we can assume that $z'=(x',u(x'))$. Then we find
\[C(\gl, z,z')=u(\gl x+(1-\gl) x')-\gl u(x) -(1-\gl)u(x').\]

\noindent We are thus in the situation of applying Korevaar's concavity maximum principle \cite{kore} to conclude. We briefly recall the argument for the reader's convenience.

\noindent As $(\gl, z, z')$ is a point of maximum, the gradient in $x$ and in $x'$ is zero and thus 
\[\nabla u(\gl x +(1-\gl)x')=\nabla u(x)=\nabla u(x').\]

\noindent As the second derivative of $C(\gl,(x+\tau,u(x+\tau)),(x'+\tau,u(x'+\tau)))$ is nonpositive in zero for every direction $\tau\in \R^m$ we get 
\[D^2u(\gl x+(1-\gl) x')-\gl D^2u(x) - (1-\gl) D^2 u(x') \le 0.\]
Using the equation satisfied by $u$, this yields the desired contradiction.
\end{proof}

 \noindent We now finally turn to the proof of our main result:
\begin{thm}\label{convTVn}
Let $g \in \Ldeufin$ be a convex function and $u$ be the minimizer of
\begin{equation*}
 \min_{ BV_\gga \cap \Ldeufin} \totvarn{u} +\frac{1}{2} \int_{\R^m} |u-g|^2 d\gga
 \end{equation*}
 then $u$ is a convex function.
 
\end{thm}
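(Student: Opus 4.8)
The plan is to obtain Theorem~\ref{convTVn} by a double approximation: first approximate the total variation functional by the regularized functionals $J_\eps$ from \eqref{probepsn}, and independently approximate the whole-space problem by Dirichlet problems on balls $B_R$ as in \eqref{probepsnball}. Since the earlier propositions already establish convexity of the minimizers of \eqref{probepsnball} for $g$ convex (via Proposition~\ref{regularity} for the regularity needed to run Korevaar's argument, and the concavity of $\tE$), the remaining work is purely a matter of passing to the limit.

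First I would fix $\eps=1$ (the general $\eps$ being recovered by the rescaling already mentioned) and, for $R>0$ and $M$ chosen as in Proposition~\ref{contact}, let $u_{R}$ denote the minimizer of \eqref{probepsnball}; by the previous two propositions $u_R$ is convex on $B_R$. I would then show $u_R$ converges, as $R\to\infty$, to the minimizer $u_\eps$ of \eqref{probepsn}. The natural route is a compactness/$\Gamma$-convergence argument: uniform $L^\infty$ bounds on $u_R$ (coming from the comparison principle, Proposition~\ref{comparison}, which bounds $|u_R|$ by $|g|_{L^\infty}$ plus controlled terms, so that in particular $|u_R|\le M$), together with uniform bounds on $\totepsn{u_R}{B_R}$ obtained by comparing the energy of $u_R$ with that of a fixed competitor, give weak-$*$ $BV_\gga$ and $L^2_\gga$ compactness on every fixed ball. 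The lower semicontinuity of $J_\eps$ (noted after \eqref{probepsn} via Reshetnyak) gives that any limit point does not exceed $\inf J_\eps$, while testing with smooth compactly supported competitors on large balls shows the limit is a minimizer; by strict convexity of the fidelity term the minimizer of \eqref{probepsn} is unique, so the full family converges. Convexity is preserved under pointwise (or $L^1_{loc}$) limits, hence $u_\eps$ is convex.

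Next I would let $\eps\to 0$. The functionals $J_\eps$ decrease to $J_0=\totvarn{\cdot}+\frac12\int|u-g|^2$ monotonically, and one has the elementary bound $J_0(u)\le J_\eps(u)\le J_0(u)+\eps\sqrt{\gga(\R^m)}=J_0(u)+\eps$, so $\{u_\eps\}$ has energy $J_0(u_\eps)\le J_\eps(u_\eps)\le J_\eps(u_0)\le J_0(u_0)+\eps$, giving uniform $BV_\gga\cap L^2_\gga$ bounds. Extracting a limit $u_*$ and using lower semicontinuity of $\totvarn{\cdot}$ for $L^2_\gga$ convergence and the convergence of the fidelity term, one gets $J_0(u_*)\le\liminf J_0(u_\eps)\le\limsup J_\eps(u_\eps)\le J_0(u_0)$, so $u_*$ minimizes \eqref{probun}; uniqueness (again strict convexity of the fidelity term) forces $u_*=u_0=u$ and the whole family to converge. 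Since each $u_\eps$ is convex and convexity passes to the limit, $u$ is convex, which is the claim.

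The main obstacle is the first limit, $R\to\infty$: one must make sure the Dirichlet boundary term in \eqref{probepsnball} does not contribute in the limit and that no mass escapes to infinity under the Gaussian measure. The vertical contact angle property (Proposition~\ref{contact}) is exactly what controls the boundary term — with $M$ large the minimizer detaches from the boundary datum — so the energy on $B_R$ genuinely approximates the energy on $\R^m$; and the Gaussian weight, decaying at infinity, together with the uniform $L^\infty$ bound $|u_R|\le M$, prevents loss of mass and makes the tail contributions negligible. Once these two points are handled the rest is the standard direct-method bookkeeping sketched above; everything else (the Korevaar argument, regularity, comparison) is supplied by the earlier results.
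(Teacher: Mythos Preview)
Your proposal is correct and follows essentially the same double-limit strategy as the paper: first pass $R\to\infty$ to go from the Dirichlet problem \eqref{probepsnball} to the whole-space regularized problem \eqref{probepsn}, then let $\eps\to 0$, using that convexity survives both limits. The paper's own proof is extremely terse---it simply cites \cite[Th.~3.1]{CGN} for the convergence $u_R\to u_\eps$ and says ``analogously'' for $\eps\to 0$---so your energy-comparison / lower-semicontinuity sketch is in fact more detailed than what the paper writes, but the route is the same.
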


\begin{proof}
By Proposition \ref{contact} we see that if $u_R$ is the minimizer of \eqref{probepsnball} then it is convex. Arguing as in \cite[Th. 3.1]{CGN}, we see that $u_R$ converges locally uniformly to $u_\eps$ the minimizer of \eqref{probepsn}. Analogously, we can let $\eps$ goes to zero and get that $u_\eps$ converges to  $u$ the solution of \eqref{probun} which is thus convex.
\end{proof}

\noindent Let us also notice that along the same lines we can prove the following result:

\begin{thm}\label{convOUn}
Let $g$ be a convex $\Ldeufin$ function then the minimizer of 
\[\min_{u \in H^1_\gga(\R^m)} \int_{\R^m} \frac{|\nabla u|^2}{2} d\gga + \frac{1}{2}\int_{\R^m} |u-g|^2 d\gga \]
is convex.
\end{thm}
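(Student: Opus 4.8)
The plan is to mirror, step by step, the argument already carried out for the total variation functional \eqref{probun}, replacing the degenerate elliptic operator with the (nicer) Ornstein--Uhlenbeck operator. First I would introduce the regularized Dirichlet problem on balls,
\[
\min_{u\in H^1_\gga(B_R)} \int_{B_R} \frac{|\nabla u|^2}{2} d\gga + \frac{1}{2}\int_{B_R} |u-g|^2 d\gga + \int_{\partial B_R} |u-M| \gga(x) d\hausmoins(x),
\]
for a constant $M$ to be fixed large. Existence and uniqueness of a minimizer $u_R$ are standard (the functional is strictly convex and coercive on $H^1_\gga(B_R)$), and the Euler--Lagrange equation is $-\dive_\gga(\nabla u)+u = g$ in $B_R$, i.e.\ $-\Delta u + x\cdot\nabla u + u = g$, together with the boundary inclusion $[\nabla u\cdot\nu]\in\sign(M-u)$ on $\partial B_R$, exactly as in Theorem~\ref{subdif} but with $\nabla F(\nabla u)$ replaced by $\nabla u$. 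The comparison principle of Proposition~\ref{comparison} has a direct analogue here (it is in fact easier, since the operator is uniformly elliptic), and interior $\cC^{2,\alpha}$ regularity follows from classical elliptic theory (see \cite{GT}) because $g$ convex is locally Lipschitz; one does not even need the machinery of \cite{giaquinta}.

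Next I would establish the vertical contact angle property: for $M$ large enough the minimizer $u_R$ satisfies $[\nabla u_R\cdot\nu]=1$ on $\partial B_R$, so that $u_R<M$ throughout. As in Proposition~\ref{contact}, I would take the spherical-cap barrier $v(x)=C-\sqrt{r^2-|x-x_0|^2}$ on $B_r(x_0)\subset B_R$ (extended by $M$ outside) and check that, for $C$ large depending on $r$, $R$ and $\ginf$ restricted to $B_R$, $v$ is a supersolution of $-\dive_\gga(\nabla v)+v-g\ge 0$; the computation is strictly simpler than in Proposition~\ref{contact} because $\nabla F(\nabla v)$ is replaced by $\nabla v = (x-x_0)/\sqrt{r^2-|x-x_0|^2}$, which is still controlled, and the divergence term is bounded by $m/r$ plus lower order terms. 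Comparison then gives $u_R\le v<M$ near any boundary point, hence the vertical contact angle.

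With interior regularity and the contact angle in hand, I would run the geometric Korevaar argument of the previous section essentially verbatim. Form the subgraph $E=\{(x,t)\in B_R\times[-M;M] : t<u_R(x)\}$ and its complement $U=\overline{E^c}$; the vertical distance function $C(\gl,z,z')=d^v(\gl z+(1-\gl)z',U)$ attains its maximum on the compact set $[0,1]\times U\times U$; by the contact angle the maximum is attained in the interior and (lowering $t$) at points of the form $(x,u_R(x))$, so $C(\gl,z,z')=u_R(\gl x+(1-\gl)x')-\gl u_R(x)-(1-\gl)u_R(x')$. At such a maximum the first-order conditions force $\nabla u_R(\gl x+(1-\gl)x')=\nabla u_R(x)=\nabla u_R(x')$ and the second-order conditions give $D^2u_R(\gl x+(1-\gl)x')-\gl D^2u_R(x)-(1-\gl)D^2u_R(x')\le 0$; feeding the equality of gradients and this matrix inequality into the Euler--Lagrange equation $-\Delta u_R + x\cdot\nabla u_R + u_R = g$ (taking traces and using convexity of $g$) yields a contradiction unless the maximum is zero, i.e.\ $u_R$ is convex. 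Finally I would pass to the limit $R\to\infty$: arguing as in \cite[Th.~3.1]{CGN}, $u_R$ converges locally uniformly to the minimizer of the problem on all of $\R^m$, and local uniform limits of convex functions are convex.

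The main obstacle is the Korevaar contradiction step: one must verify that the geometric contact-angle reduction really puts the maximum at interior points with $t=u_R(x)$, and that the interplay between the first-order condition (equal gradients) and the linear equation genuinely contradicts $C>0$. This is where the convexity of $g$ is used, through the inequality $g(\gl x+(1-\gl)x')\le \gl g(x)+(1-\gl)g(x')$ combined with the operator being evaluated at three points with the same gradient; the Ornstein--Uhlenbeck lower-order term $x\cdot\nabla u_R$ contributes $\gl x\cdot\nabla u_R(x)+(1-\gl)x'\cdot\nabla u_R(x') $ versus $(\gl x+(1-\gl)x')\cdot\nabla u_R(\gl x+(1-\gl)x')$, and since all three gradients coincide this term telescopes correctly and does not spoil the sign. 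Everything else (existence, comparison, barriers, regularity, the limit $R\to\infty$, and then there is no $\eps$-regularization needed here since the functional is already smooth) is routine once the total-variation case has been treated.
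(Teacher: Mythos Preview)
Your proposal contains a genuine gap at the barrier step, and the paper takes an entirely different (and much shorter) route.

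\medskip
\textbf{The gap.} The spherical cap $v(x)=C-\sqrt{r^2-|x-x_0|^2}$ is \emph{not} a supersolution for the Ornstein--Uhlenbeck operator, no matter how large $C$ is. Writing $\rho=|x-x_0|$, one computes
\[
\Delta v \;=\; \frac{r^2}{(r^2-\rho^2)^{3/2}} + \frac{m-1}{\sqrt{r^2-\rho^2}},
\]
so $-\Delta v \to -\infty$ as $\rho\to r$, and the inequality $-\Delta v + x\cdot\nabla v + v - g \ge 0$ fails near the equator of the cap. Your claim that ``$\nabla v$ is still controlled, and the divergence term is bounded by $m/r$ plus lower order terms'' is simply false: both $|\nabla v|$ and $\dive(\nabla v)$ blow up. The reason the barrier works in Proposition~\ref{contact} is precisely that the operator there involves $\nabla F(\nabla v)=\nabla v/\sqrt{\eps^2+|\nabla v|^2}$, which stays bounded by $1/\eps$; this boundedness is what makes the hemisphere a barrier for the prescribed-mean-curvature equation. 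For the Laplacian there is no such saturation, and the whole vertical-contact-angle mechanism---which is intrinsically tied to linear-growth integrands and to the graph having a \emph{perimeter} interpretation as in Proposition~\ref{regE}---does not carry over. Without it you have no way to force the maximum of the concavity function to be interior, and the Korevaar step cannot even get started.

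\medskip
\textbf{What the paper does instead.} Rather than rebuilding the machinery, the paper observes that the Ornstein--Uhlenbeck functional is a limit of the already-treated regularized problems: setting
\[
J_\lambda(u)=\lambda^2\int_{\R^m}\Bigl[\sqrt{1+|\nabla u|^2/\lambda^2}-1\Bigr]\,d\gga+\tfrac12\int_{\R^m}|u-g|^2\,d\gga,
\]
the minimizer of $J_\lambda$ coincides with the minimizer of $\int\sqrt{\lambda^2+|\nabla u|^2}\,d\gga+\tfrac{1}{2\lambda}\int|u-g|^2\,d\gga$, which is convex by the results already proved for \eqref{probepsn}. Since $\lambda^2[\sqrt{1+|p|^2/\lambda^2}-1]\to|p|^2/2$ as $\lambda\to\infty$, passing to the limit gives convexity of the Ornstein--Uhlenbeck minimizer. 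This reduction sidesteps all boundary issues by reusing the linear-growth case rather than imitating it.
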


\begin{proof}
Let 
\[J_\lambda(u)=\lambda^2 \int_{\R^m} \left[\sqrt{1+\frac{|\nabla u|^2}{\lambda^2}}-1\right] d\gga + \frac{1}{2} |u-g|^2 d\gga\]
then $u_\lambda$ minimizes $J_\lambda$ if and only if it minimizes
\[ \int_{\R^m} \sqrt{\lambda^2+|\nabla u|^2} d\gga + \frac{1}{2\lambda} \int_{\R^m} |u-g|^2 d\gga. \]
 Thus $u_\lambda$ is convex. Using that for any $p$, 
\[\lim_{\lambda\to \infty}\lambda^2\left[\sqrt{1+\frac{|p|^2}{\lambda^2}}-1\right]=\frac{|p|^2}{2}\]
 we get the conclusion. 
\end{proof}

\begin{remarque}\rm
When trying to extend the previous method for more general functionals, a difficulty arise due to the lack of boundary regularity of the minimizers. 
More precisely, when reasoning as in Proposition \ref{regE},
these functionals give rise to anisotropic perimeters, for which it is not known if the minimizers of the 
corresponding obstacle problem are smooth in a neighborhood of the obstacle. 

\end{remarque}

\end{document}